\DeclareSymbolFont{AMSb}{U}{msb}{m}{n}
\def\ignore#1{\relax}
\def\new@mathgroup{\alloc@8\mathgroup\mathchardef\@cclvi}
\patchcmd{\document@select@group}{\sixt@@n}{\@cclvi}{}{}
\patchcmd{\select@group}{\sixt@@n}{\@cclvi}{}{}
\renewcommand{\ge}{\geqslant}
\renewcommand{\ge}{\geqslant}
\renewcommand{\le}{\leqslant}
\renewcommand{\unrhd}{\trianglerighteqslant}
\theoremstyle{plain}
\newtheorem{theorem}{Theorem}[section]
\newtheorem{lemma}[theorem]{Lemma}
\newtheorem{corollary}[theorem]{Corollary}
\theoremstyle{definition}
\newtheorem{example}[theorem]{Example}
\numberwithin{equation}{section}
\DeclareMathOperator{\Res}{Res}
\DeclareMathOperator{\Ind}{Ind}
\DeclareMathOperator{\row}{row}
\DeclareMathOperator{\node}{node}
\DeclareMathOperator{\spn}{span}
\newcommand{\Std}{\mathrm{Std}}
\newcommand\std[1]{\mathcal{T}^\Std(#1)}
\newcommand{\cell}[3]{\Delta_{{#1}_{#2}}^{#3}}
\newcommand{\el}{l}
\newcommand\inv{^{-1}}
\def\mf{\mathfrak}
\def\la{\lambda}
\def\mft{\mathfrak t}
\def\mfs{\mathfrak s}
\def\mfv{\mathfrak v}
\def\power #1{^{(#1)}}
\def\ignore#1{\relax}
\begin{document}\usetikzlibrary{matrix,arrows,decorations.pathreplacing,backgrounds,decorations.markings}

\title[Cell filtration of restricted modules]
{A cell filtration of the restriction of a cell module}

\author{Frederick M. Goodman}
\address
{Department of Mathematics, University of Iowa, Iowa City, IA, USA}
\email{frederick-goodman@uiowa.edu}

\author[R. Kilgore]{Ross Kilgore}
\email{rosskilgore@gmail.com}

\author[N.Teff]{Nicholas Teff}
\email{nicholas\!.\!teff@gmail.com}

\begin{abstract}
We give a new proof that the restriction of a cell module of the Hecke algebra of the symmetric group  $\mathfrak S_n$ to the Hecke algebra of $\mathfrak S_{n-1}$  has a filtration by cell modules 
\end{abstract}

\thanks{We thank Andrew Mathas for detailed discussions of this problem.}

\keywords{Cellular algebra;  Hecke algebra; Murphy basis;  cell filtration} 
\maketitle

\section{Introduction}
Let $R$ be an integral domain and $q$ be a unit in $R$.   The  Iwahori--Hecke algebra of the symmetric group, denoted  $\mathcal{H}_n=\mathcal{H}_n(q^2)$, is the algebra presented by  generators $T_1,\ldots, T_{n-1}$, and  relations
\begin{align*}
&T_iT_j=T_jT_i,&&\text{if $|i-j|>1$,}\\
&T_iT_{i+1}T_{i}=T_{i+1}T_iT_{i+1},&&\text{for $i=1,\ldots,n-2$,}\\
&(T_i-q)(T_i+q^{-1})=0,&&\text{for $i=1,\ldots,n-1$.}
\end{align*}
If $v\in\mathfrak{S}_n$, and $v=s_{i_1}s_{i_2}\cdots s_{i_l}$ is a reduced expression for $v$ in $\mathfrak{S}_n$, then $T_v=T_{i_1}T_{i_2}\cdots T_{i_l}$ is well defined in $\mathcal{H}_n(q^2)$ and $\{T_v\mid v\in \mathfrak{S}_n\}$ is an $R$--basis of $\mathcal{H}_n(q^2)$.
It follows from this that $\mathcal H_n$ imbeds in $\mathcal H_{n+1}$ for all $n \ge 0$. 

The representation theory of $\mathcal{H}_n(q^2)$ was studied by Dipper and James in ~\cite{MR812444, MR872250},  generalizing the approach to the representation theory of the symmetric groups via Specht modules in ~\cite{MR513828}.   Murphy developed a new combinatorial approach to the representation theory of the Hecke algebras $\mathcal{H}_n(q^2)$  in \cite{MR1327362}.     Murphy's analysis involves certain elements $m^\la_{\mfs \mft}$ of the Hecke algebra, indexed by  Young diagrams $\la$ of size $n$ and a pairs of $\mfs, \mft$ of standard $\la$--tableaux.   Murphy shows that his elements satisfy the following properties:
\begin{enumerate}
\item The collection of elements $m^\la_{\mfs \mft}$, as $\la$ varies over Young diagrams of size $n$ and $\mfs, \mft$ vary over standard $\la$--tableaux,  is an $R$--basis of $\mathcal{H}_n(q^2)$.
\item   For $h \in \mathcal{H}_n(q^2)$,   $m^\la_{\mfs \mft} h =  \sum_\mfv  r_\mfv  m^\la_{\mfs \mfv}  + x$, where  the coefficients $r_v \in R$  depend only on $\mft$ and $h$  (and not on $\mfs$)   and $x$ is in the $R$--span of basis elements  $m^\mu_{\mathfrak{ x y}}$  where $\mu$ is greater than $\la$ in dominance order.  
\item  $(m^\la_{\mfs \mft})^*  =  m^\la_{\mft \mfs}$,  where $*$ is the algebra involution of $\mathcal{H}_n(q^2)$ defined by ${T_v}^*  = T_{v\inv}$.   
\end{enumerate}

Independent of Murphy and more or less simultaneously,  Graham and Lehrer~\cite{MR1376244}  introduced a theory of {\em cellular algebras}.  A cellular algebra is an algebra $A$ over an integral domain $R$ with an $R$--linear involution $*$,   and auxiliary data consisting of a finite partially ordered set $(\hat A,  \unrhd)$ and for each $\la \in \hat A$ a finite index set $\hat A^\la$.    $A$ is required to have an $R$--basis $a^\la_{\mfs \mft}$ indexed by elements $\la \in \hat A$ and  pairs $\mfs, \mft$ in $\hat A^\la$, having properties analogous to properties (2) and (3) of the Murphy basis, listed above.   The basis of $A$ is called a {\em cellular basis}.   Thus, in the language of Graham and Lehrer,  Murphy showed that his collection of elements $m^\la_{\mfs \mft}$ is a cellular basis of the Hecke algebra $\mathcal{H}_n(q^2)$. For the Hecke algebra $\mathcal H_n = \mathcal{H}_n(q^2)$,  the relevant partially ordered set $(\widehat{\mathcal{H}}_n, \unrhd)$ is the set of Young diagrams of size $n$ with dominance order and the index set $\widehat{\mathcal{H}}_n^\la$ is the set of standard tableaux of shape $\la$.  

Let $A$ be a cellular algebra with data $(*, \hat A, \unrhd)$ and cellular basis $\mathscr A = \{a^\la_{\mfs, \mft} :  \la \in \hat A \text{ and }  \mfs, \mft \in \hat A^\la\}$.    Fix $\la \in \hat A$ and define $\hat A^{\rhd \la}$ to be the span of basis elements $a^\mu_{\mathfrak{x y}}$  with $\mu \rhd \la$,  and  likewise
$\hat A^{\unrhd \la}$ to be the span of basis elements $a^\mu_{\mathfrak{x y}}$  with $\mu \unrhd \la$.  It follows that these are two--sided $*$--invariant ideals of $A$.    Moreover,  for any fixed $\mfs \in \hat A^\la$, 
$\Delta^\la  =  \spn\{a^\la_{\mfs \mft}  + A^{\rhd \la} :  \mft \in \hat A^\la\}$ is an $A$--submodule of $A^{\unrhd \la}/A^{\rhd \la}$,  which is free as an $R$--module.   Up to isomorphism, the {\em cell module} $\Delta^\la$ is independent of $\mfs$.     Graham and Lehrer show that the cell module has a canonical bilinear form.   When $R$ is specialized to be a field, the quotient of the cell module by the radical  of the bilinear form is either zero or simple, and all simple modules arise in this way ~\cite[Theorem 3.4]{MR1376244}.  

Let $A$ be  a cellular algebra and $M$ an $A$--module.   Say that $M$ has an {\em order preserving cell filtration}  if  $M$ has a filtration by $A$--submodules:
$$
(0) = M_0 \subset M_1 \subset \cdots M_{s-1}  \subset M_s = M,
$$
such that for each $i$, there exists $\la\power i \in \hat A$ such that $M_i/M_{i-1} \cong \Delta^{\la \power i}$, and, moreover, $\la\power 1 \rhd \la \power 2 \cdots  \rhd \la \power s$.  

This note concerns the following theorem regarding restrictions of cell modules of the Hecke algebras $\mathcal H_n = \mathcal H_n(q^2)$:
\begin{theorem}[Jost, Murphy]  \label{theorem cell filtration of restricted cell modules}
Let $n \ge 1$ and $\lambda$ be a Young diagram of size $n$.  Let $\cell {\mathcal H} n \lambda$ be the corresponding cell module of $\mathcal H_n$.  Then 
$\Res^{\mathcal H_n}_{\mathcal H_{n-1}}(\cell {\mathcal H} n \lambda)$    has an order preserving filtration by cell modules of $\mathcal H_{n-1}$.  
\end{theorem}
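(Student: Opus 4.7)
My plan is to work directly with the Murphy basis and partition it according to the cell of $\la$ containing the entry $n$. Recall that $\cell{\mathcal H}{n}{\la}$ has $R$-basis given by the images $m_\mft$ of $m^\la_{\mft^\la \mft}$ modulo $\mathcal H_n^{\rhd \la}$, indexed by $\mft \in \std\la$, for some fixed reference tableau $\mft^\la$. For each $\mft \in \std\la$, the entry $n$ occupies a removable box $c(\mft)$ of $\la$; set $\mu(\mft) := \la \setminus c(\mft)$, a Young diagram of size $n-1$, and $\mft' := \mft|_{\{1,\dots,n-1\}} \in \std{\mu(\mft)}$. Order the removable boxes of $\la$ as $c_1,\dots,c_k$ so that the shapes $\mu_i := \la \setminus c_i$ are in strictly decreasing dominance order, $\mu_1 \rhd \mu_2 \rhd \cdots \rhd \mu_k$. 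Define
\[
M_0 := 0, \qquad M_i := \spn_R\bigl\{m_\mft :  \mu(\mft) \unrhd \mu_i\bigr\} \quad (1 \le i \le k).
\]
The claim is that $0 = M_0 \subset M_1 \subset \cdots \subset M_k = \cell{\mathcal H}{n}{\la}$ is an order-preserving cell filtration.

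The main step, and the principal technical obstacle, is proving that each $M_i$ is stable under right multiplication by $\mathcal H_{n-1}$. Since $\mathcal H_{n-1}$ is generated by $T_1,\dots,T_{n-2}$, this reduces to showing that for $j \le n-2$, the expansion
\[
m_\mft \cdot T_j = \sum_{\mfu \in \std\la} r_\mfu\, m_\mfu \qquad \text{in } \cell{\mathcal H}{n}{\la}
\]
involves only tableaux $\mfu$ with $\mu(\mfu) \unrhd \mu(\mft)$. The cellular axiom~(2) controls only dominance of the outer shape $\la$, not the finer dominance on the inner shape $\mu$, so this must be extracted from the explicit combinatorics of the Murphy basis. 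The approach is: when $s_j \cdot \mft$ is standard, express $m_\mft T_j$ as a linear combination of $m_{s_j\mft}$ and possibly $m_\mft$ via the quadratic relation, noting that $c(s_j\mft) = c(\mft)$ because $j,j+1<n$; otherwise apply Murphy's straightening/Garnir relations, and argue that any rearrangement producing a tableau $\mfu$ with $c(\mfu) \neq c(\mft)$ must in fact yield $\mu(\mfu) \rhd \mu(\mft)$.

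For the quotients, once stability is in hand, define an $R$-linear map
\[
\phi_i :  M_i \longrightarrow \cell{\mathcal H}{n-1}{\mu_i}, \qquad \phi_i(m_\mft) = \begin{cases} m_{\mft'} & \text{if } \mu(\mft) = \mu_i, \\ 0 & \text{if } \mu(\mft) \rhd \mu_i. \end{cases}
\]
By construction $\phi_i$ is $R$-linearly surjective with kernel $M_{i-1}$, so it suffices to prove that $\phi_i$ intertwines the right action of $\mathcal H_{n-1}$. For $\mft$ with $\mu(\mft) = \mu_i$ and $j \le n-2$, this follows by comparing the straightening of $m^\la_{\mft^\la \mft} T_j$ with that of $m^{\mu_i}_{\mft^{\mu_i}\mft'} T_j$: both are governed by the same combinatorics on the sub-tableau $\mft'$ (since $j,j+1$ avoid $n$), and by the analysis of the previous step any contribution from $\mfu$ with $\mu(\mfu) \rhd \mu_i$ lies in $\ker\phi_i$. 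Piecing everything together gives isomorphisms
\[
M_i/M_{i-1} \;\cong\; \cell{\mathcal H}{n-1}{\mu_i} \qquad \text{with } \mu_1 \rhd \mu_2 \rhd \cdots \rhd \mu_k,
\]
which is the desired order-preserving cell filtration. The hard part is the dominance control in step two; the remaining verifications are essentially bookkeeping on the Murphy basis.
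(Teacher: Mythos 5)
Your filtration is the same as the paper's: ordering the removable boxes so that $\mu_1\rhd\cdots\rhd\mu_k$ means listing them from bottom to top, and your $M_i$ coincides with the paper's $N_i$. Your first step (stability of $M_i$ under $\mathcal H_{n-1}$) is a correct sketch: since dominance of standard tableaux at level $n-1$ is exactly the condition $\mu(\mfu)\unrhd\mu(\mft)$, the straightening lemma ~\cite[Lemma 3.15]{MR1711316} gives what you need; this is packaged in the paper as Lemma~\ref{lemma: a submodule lemma} and Corollary~\ref{H n-1 invariance of tableaux with row index of n large}.

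The genuine gap is in your identification of the quotients. The assertion that the expansions of $m^\la_{\mft^\la\mft}T_j$ and of $m^{\mu_i}_{\mft^{\mu_i}\mft'}T_j$ are ``governed by the same combinatorics on $\mft'$ since $j,j+1$ avoid $n$'' is precisely the step that is not evident --- it is the gap in ~\cite[Proposition 6.1]{MR1711316} that this paper exists to close. The two expansions are computed modulo different ideals ($\mathcal H_n^{\rhd\la}$ versus $\mathcal H_{n-1}^{\rhd\mu_i}$), and when $\mft s_j$ fails to be standard they are driven by Garnir relations attached to strips of \emph{different shapes}: whenever the removed node $c_i$ lies in the relevant Garnir strip, the strip for $\la$ has one more box than the strip for $\mu_i$, and the $\la$--relation contains extra terms (the paper's set $B$ in Case~2: row standard tableaux putting the largest entry of the strip at the bottom of the strip rather than at $c_i$) that have no counterpart on the $\mu_i$ side. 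One must prove these extra terms land in $M_{i-1}$ rather than merely in $\mathcal H_n^{\rhd\la}$, which needs a separate submodule argument (Corollary~\ref{corollary: another submodule theorem}). Moreover, to compare the two reductions at all, the paper first builds an honest $\mathcal H_{n-1}$--homomorphism $M^{\mu_i}\to\Res(M^\la)$ at the level of permutation modules using the identity $D(\alpha)^*T_{a,n}m_{\mu}=m_\la T_{a,n}$ of Lemma~\ref{lemma:  curious identity}, and then uses the Garnir-generator description of $M^{\mu_i}\cap\mathcal H_{n-1}^{\rhd\mu_i}$ (Lemma~\ref{Garnir characterization of M lambda intersect H greater than lambda}) to descend it to the cell module. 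None of this is bookkeeping; it is the substance of the theorem, and your proposal does not supply it.
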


Jost ~\cite{MR1461487}  has shown, using the Dipper--James description of Specht modules of the Hecke algebras ~\cite{MR812444},
that the restriction of a Specht module has a filtration by Specht modules.  Together with Murphy's result that the cell modules of the Hecke algebras can be identified with the Specht modules~\cite[Theorem 5.3]{MR1327362}, this shows that the restriction of a cell module has a cell filtration.

It seems that  there is no complete proof of 
\hyperref[theorem cell filtration of restricted cell modules]{Theorem 
\ref*{theorem cell filtration of restricted cell modules}} in the literature based on Murphy's description of the cellular structure and representation theory of the Hecke algebras.   The proof given in ~\cite[Proposition 6.1]{MR1711316} has a gap, as it is not evident that the filtration of 
$\Res^{\mathcal H_n}_{\mathcal H_{n-1}}(\cell {\mathcal H} n \lambda)$  
provided there has subquotients isomorphic to cell modules of $\mathcal H_{n-1}$
The purpose of this note is to give a detailed proof of this fact.

Our treatment of this problem developed out of a correspondence with Andrew Mathas.  In the meanwhile, Mathas has also found a different approach, using the seminormal basis, which extends also to   the more general contexts of cyclotomic Hecke algebras and cyclotomic quiver Hecke algebras ~\cite{Mathas-private-communication-2015}.  

 \medskip
We remark that there is a companion theorem regarding cell filtrations of induced modules:
\begin{theorem}[Dipper--James, Murphy, Mathas]  \label{theorem:  cell filtration of induced cell modules for Hecke algebra}
Let $\mu$ be a Young diagram of size $n$ and let $\cell {\mathcal H} n \mu$ be the corresponding cell module of $\mathcal H_n$.  Then $\Ind_{\mathcal H_n}^{\mathcal H_{n+1}}(\cell {\mathcal H} n \mu)$  has an order preserving filtration by cell modules of $\mathcal H_{n+1}$.
\end{theorem}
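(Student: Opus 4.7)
The plan is to construct the cell filtration explicitly by adapting the Murphy-basis methods used for restriction to the induction setting. Write $N = \Ind_{\mathcal H_n}^{\mathcal H_{n+1}}(\cell{\mathcal H}{n}{\mu}) = \mathcal H_{n+1} \otimes_{\mathcal H_n} \cell{\mathcal H}{n}{\mu}$, and decompose $\mathcal H_{n+1}$ as a right $\mathcal H_n$-module via the distinguished left coset representatives of $\mathfrak S_n$ in $\mathfrak S_{n+1}$: $L_{n+1} = 1$ and $L_k = T_n T_{n-1} \cdots T_k$ for $1 \le k \le n$. This yields a free $R$-basis of $N$ of the form $\{L_k \otimes v_\mft : 1 \le k \le n+1,\ \mft \in \std\mu\}$, where $v_\mft$ is the Murphy basis vector of $\cell{\mathcal H}{n}{\mu}$ indexed by a standard $\mu$-tableau $\mft$.

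Reinterpret each pair $(k, \mft)$ as a standard tableau $\mfs$ of size $n+1$ by shifting the entries of $\mft$ that are $\ge k$ up by one and inserting $n+1$ into the newly vacated position; the resulting shape is $\mu \cup \{\alpha\}$ for a uniquely determined addable box $\alpha$ of $\mu$. Enumerate the addable boxes of $\mu$ as $\alpha_1, \ldots, \alpha_r$ so that $\la^{(1)} \rhd \la^{(2)} \rhd \cdots \rhd \la^{(r)}$ where $\la^{(i)} = \mu \cup \{\alpha_i\}$, and let $F_i \subseteq N$ be the $R$-span of basis elements whose associated shape is $\la^{(j)}$ for some $j \le i$. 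Stability of $F_i$ under generators $T_j$ with $j < n$ follows from Murphy's axiom (2) applied inside $\mathcal H_n$, since these generators act only on the tensor factor $v_\mft$ and produce a combination of $v_{\mathfrak u}$'s plus terms from a more dominant shape stratum in $\mathcal H_n$, which upon inducing remain within $F_i$.

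The main obstacle is to control the action of $T_n$, which braids with the $L_k$ and, via the quadratic relation $(T_n - q)(T_n + q^{-1}) = 0$, can \emph{a priori} mix basis elements across shape strata. One must verify that $T_n \cdot (L_k \otimes v_\mft)$ lies in $F_i$ modulo $F_{i-1}$, where $i$ is the index of the shape $\la(k, \mft)$. This is the induced-module analog of the delicate combinatorial step in the proof of Theorem \ref{theorem cell filtration of restricted cell modules}, and it is here that the dominance order on addable boxes plays an essential role: swapping the position of $n+1$ in $\mfs$ to a more dominant addable box contributes a term in a higher stratum, while swapping to a less dominant box contributes a term in a strictly smaller $F_j$ (with $j<i$) only after the quadratic relation is applied, which is what one needs. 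Once this stability is established, matching the action of $\mathcal H_{n+1}$ on the basis $\{L_k \otimes v_\mft : \la(k, \mft) = \la^{(i)}\}$ of $F_i / F_{i-1}$ against the action on the Murphy basis of $\cell{\mathcal H}{n+1}{\la^{(i)}}$, using axiom (2) in $\mathcal H_{n+1}$, identifies each subquotient and yields the order-preserving cell filtration.
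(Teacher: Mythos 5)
The theorem you are proving is one the paper itself does not prove directly: it is deduced there by combining the Dipper--James result that induced Specht modules have Specht filtrations with Murphy's identification of cell modules with Specht modules, with a pointer to Mathas's alternative proof via cell filtrations of permutation modules. So a direct Murphy-basis argument of the kind you sketch would be genuinely new relative to this paper. However, what you have written is an outline, not a proof, and the gap sits exactly where the whole difficulty of the theorem lies. You write that ``one must verify'' that $T_n\cdot(L_k\otimes v_\mft)$ stays in $F_i$ modulo lower strata, and that ``once this stability is established'' the subquotients can be matched with cell modules --- but these two verifications \emph{are} the theorem. For comparison, the analogous verification for restriction occupies all of Section 3 of the paper and requires the Garnir-tableau description of $M^\lambda\cap\mathcal H_n^{\rhd\lambda}$, the submodule lemma, and a two-case analysis depending on whether the relevant node lies in a Garnir strip; nothing of comparable substance appears in your proposal, and the heuristic about dominant versus less dominant addable boxes is not an argument.

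There are also two concrete errors in the parts you do spell out. First, the claim that the generators $T_j$ with $j<n$ ``act only on the tensor factor $v_\mft$'' is false: one must commute $T_j$ past the coset representative $L_k=T_nT_{n-1}\cdots T_k$, and for $j$ near $k$ this changes the coset representative (indeed $s_jL_k$ need not lie in $L_k\mathfrak S_n$), so the action can move a basis vector between strata even before $T_n$ enters the picture; this needs the same kind of care as the $T_n$ case. Second, the bijection $(k,\mft)\mapsto\mfs$ is not well defined as stated: after shifting the entries of $\mft$ that are $\ge k$ up by one, every node of $\mu$ is still occupied, so there is no ``newly vacated position'' into which to insert $n+1$; the correct combinatorial bookkeeping (and the check that the resulting tableau is standard and that the map is a bijection onto $\bigcup_\alpha\std{\mu\cup\alpha}$, which underlies the identity $(n+1)\,|\std\mu|=\sum_\alpha|\std{\mu\cup\alpha}|$) is missing. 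As it stands the proposal is a plausible plan whose essential steps remain to be carried out.
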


Dipper and James showed that the induced module of a Specht module of $\mathcal H_n$ has a filtration by Specht modules of $\mathcal H_{n+1}$.  Together with Murphy's theorem  ~\cite[Theorem 5.3]{MR1327362}, this yields 
\hyperref[theorem:  cell filtration of induced cell modules for Hecke algebra]{Theorem \ref*{theorem:  cell filtration of induced cell modules for Hecke algebra}}.
A different proof was recently given by Mathas ~\cite{MR2531227};  this proof is based on Murphy's theorem ~\cite[Theorem 7.2]{MR1327362}   on the existence of a cell filtration of permutation modules of $\mathcal H_n$.

\section{Preliminaries}
We will assume familiarity with the usual combinatorial notions related to the representation theory of the symmetric groups and their Hecke algebras.  We refer to ~\cite[Ch.~3]{MR1711316}  for details and notation.  

For purposes of this note, for a Young diagram $\la$ of size $n$,  a tableau will mean an assignment of the 
numbers $1, \dots, n$ to the nodes of $\la$.  We let $\mathcal T(\la)$ denote the set of all $\la$--tableaux and $\mathcal{T}^{\Std}(\lambda)$ the set of standard $\la$--tableaux.
We will let $\mft^\la$ denote the ``superstandard"  tableau of shape $\la$, in which the numbers $1$  through $n$  are
entered in increasing order from left to right along the rows of
$[\lambda]$. Thus when $n=6$ and
$\lambda=(3,2,1)$,
\begin{align}\label{tabex1}
\mathfrak{t}^\lambda=\text{\tiny\Yvcentermath1$\young(123,45,6)$}\,.
\end{align}
 We regard the symmetric group  $\mathfrak S_n$  as acting on the right on the set of numbers $1, \dots n$.
 The symmetric group $\mathfrak S_n$  acts  on  the set of tableaux of size $n$, by acting on the entries;  this action is free and transitive.    The \emph{Young subgroup} $\mathfrak{S}_\lambda$ is defined to be the row stabiliser of $\mathfrak{t}^\lambda$ in $\mathfrak{S}_{n}$.
For each $\mft \in \mathcal T(\lambda)$,  let  $w(\mft)$ denote the unique permutation such that
$\mft = \mft^\lambda w(\mft)$. 

If $\la$ is a Young diagram of size $n$,  let  $m_\la=\sum_{v\in\mathfrak{S}_\la}q^{\el(v)}T_v$,  where $\el$ denotes the length function on $\mathfrak S_n$.     For standard tableaux $\mfs, \mft$ of shape $\la$,  let 
$$
m^\la_{\mfs \mft} =  T_{w(\mathfrak{s})}^*m_\lambda T_{w(\mathfrak{t})}.
$$
These are Murphy's basis elements, as described in the introduction.    Define
$$
m^\la_{\mft}    =      m_\la  T_{w(\mft)}  +  \mathcal{H}^{\rhd \la}.
$$
The set  $\{m^\la_\mft :  \mft  \text{ is a standard $\la$--tableau}\}$ is an $R$--basis of the cell module
$\Delta^\la$.  We will write $M^\la$ for the ``permutation module"
$$
M^\la =  m_\la   \mathcal{H}_n.
$$

Let $\lambda$ be a Young diagram of size $n$  and let $\mft \in \mathcal T(\lambda)$.  Call a node of $\lambda$ {\em addable} if the addition of the node to $\la$ yields a Young diagram of size $n+1$.  Define a {\em removable node} similarly.  Let $\alpha$ be an addable node of $\lambda$.  Then we write $\mft \cup \alpha$ for the tableau of shape $\lambda \cup \alpha$ which agrees with $\mft$ on the nodes of  $\lambda$ and which has the entry $n+1$ in node $\alpha$. 
If $\mft$ is a standard $\lambda$--tableau and $1 \le k < n$,  let 
$\mft \downarrow_k$ denote the tableau obtained by deleting from $\mft$ the nodes containing $k+1, \dots, n$;  then  $\mft \downarrow_k$ is a standard tableau of size $k$.

For  $1 \le i,j \le n$,  let
\begin{align*}
T_{i,j}=
\begin{cases}
T_iT_{i+1}\cdots T_{j-1} = T_{(j, j-1, \dots,  i)},&\text{if $j\ge i$,}\\
T_{i-1}T_{i-2}\cdots T_{j} =  T_{(j, j+1, \dots, i)},&\text{if $i>j$.}
\end{cases}
\end{align*}

\begin{lemma}  \label{permutation of  tableau adjoined addable node}
Let $\lambda$ be a Young diagram of size $n$,  let $\alpha$ be a removable node of $\lambda$, and let $\mu = \lambda \setminus \alpha$.   Let $a$ be the entry of  $t^\lambda$ in the node $\alpha$.   
Let $\mfs \in \mathcal T(\mu)$ be a $\mu$--tableau.
Then
$$
w(\mfs \cup \alpha) = (n, n-1, \dots, a)  \,w(\mfs), 
$$
and
$$
T_{w(\mfs \cup \alpha)} = T_{(n, n-1, \dots, a)} T_{w(\mfs)} = T_{a, n} T_{w(\mfs)}.
$$
\end{lemma}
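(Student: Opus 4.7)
The plan has three stages: first identify a permutation $\sigma \in \mathfrak{S}_n$ with $\mft^\mu \cup \alpha = \mft^\lambda \cdot \sigma$; next use that $w(\mfs) \in \mathfrak{S}_{n-1}$ fixes $n$ to carry this identity over from $\mft^\mu$ to $\mfs$; finally verify length-additivity to lift the permutation identity to the Hecke algebra.

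For the first stage, the key combinatorial observation is that both $\mft^\lambda$ and $\mft^\mu$ are filled in row-reading order. Since $\mft^\lambda$ has the entry $a$ at the removable node $\alpha$, the tableau $\mft^\mu$ agrees with $\mft^\lambda$ on every position holding an entry less than $a$, while each position holding $k > a$ in $\mft^\lambda$ holds $k-1$ in $\mft^\mu$. Placing $n$ in $\alpha$ to form $\mft^\mu \cup \alpha$ therefore corresponds to the right action on $\mft^\lambda$ of the cyclic permutation $a \mapsto n,\ n \mapsto n-1,\ \dots,\ a+1 \mapsto a$, which in the paper's cycle notation is $(n, n-1, \dots, a)$. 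Thus $\mft^\mu \cup \alpha = \mft^\lambda \cdot (n, n-1, \dots, a)$.

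For the second stage, because $w(\mfs) \in \mathfrak{S}_{n-1}$ fixes the number $n$, its right action on a shape-$\lambda$ tableau leaves the entry $n$ in node $\alpha$ untouched, so $(\mft^\mu \cup \alpha)\, w(\mfs) = (\mft^\mu w(\mfs)) \cup \alpha = \mfs \cup \alpha$. Combined with the previous step and uniqueness of the defining permutation, this yields the first displayed identity $w(\mfs \cup \alpha) = (n, n-1, \dots, a)\, w(\mfs)$.

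For the third stage, write $\sigma = (n, n-1, \dots, a)$ and read off its one-line notation as $1, 2, \dots, a-1, n, a, a+1, \dots, n-1$, whose $n - a$ inversions all involve position $a$. Right-multiplying by $w(\mfs)$ keeps $n$ at position $a$ (since $w(\mfs)$ fixes $n$) and permutes the remaining entries within $\{1, \dots, n-1\}$, so those $n-a$ inversions persist; the remaining inversions of $\sigma\, w(\mfs)$ biject with the inversions of $w(\mfs)$ by shifting indices past position $a$. Hence $\ell(\sigma\, w(\mfs)) = \ell(\sigma) + \ell(w(\mfs))$, giving $T_{w(\mfs \cup \alpha)} = T_\sigma T_{w(\mfs)}$, and the equality $T_\sigma = T_{a,n}$ is simply the notational convention introduced just before the lemma. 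I anticipate the only delicate point being the convention for the right action in the first stage; once that is fixed, both the descent to $\mfs$ and the length count are mechanical.
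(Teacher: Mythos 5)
Your proof is correct and follows essentially the same route as the paper: identify $\mft^\mu\cup\alpha=\mft^\lambda\,(n,n-1,\dots,a)$, descend to $\mfs$ using that $w(\mfs)$ fixes $n$, and then establish $\ell((n,n-1,\dots,a)\,w(\mfs))=\ell((n,n-1,\dots,a))+\ell(w(\mfs))$ to lift the identity to the Hecke algebra. The paper phrases the last step as the statement that $(n,n-1,\dots,a)$ is a distinguished left coset representative of $\mathfrak S_{n-1}$ in $\mathfrak S_n$ (leaving the check to the reader), and your inversion count is precisely that verification.
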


\begin{proof}  We have
$$
\mfs \cup \alpha =  (\mft^\mu \cup \alpha) w(\mfs) = \mft^\lambda  (n, n-1, \dots, a) \, w(\mfs).
$$
Therefore, $$w(\mfs \cup \alpha) = (n, n-1, \dots, a)  w(\mfs).$$
Now one can check that $(n, n-1, \dots, a)$ is a distinguished left coset representative of 
$\mathfrak S_{n-1}$ in $\mathfrak S_n$.  Therefore, 
$$
T_{w(\mfs \cup \alpha)} =  T_{(n, n-1, \dots, a)}  T_{w(\mfs)} = T_{a, n} T_{w(\mfs)}.
$$
\end{proof}

\begin{lemma}  \label{lemma:  curious identity}
Let $\lambda$ be a Young diagram of size $n$,  let $\alpha$ be a removable node of $\lambda$, and let $\mu = \lambda \setminus \alpha$.     Let $r$ be the row index of $\alpha$ and let $b$ and $a$ be the first and last entry in the $r$--th row of the standard tableaux $\mft^\lambda$.  
Write
$$
D(\alpha) = 1 + q T_{a-1} + q^2 T_{a-1} T_{a-2} + \cdots + q^{a-b} T_{a-1} T_{a-2} \cdots T_b.
$$
Then
\begin{equation}  \label{relate m mu with m lambda when mu arrow lambda}
D(\alpha)^* T_{a, n} m_\mu =  m_\lambda T_{a, n}.  
\end{equation}
\end{lemma}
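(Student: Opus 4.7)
The plan is to introduce the auxiliary element
\[
K \;=\; \sum_{u \in \mathrm{Stab}_{\mathfrak S_\lambda}(a)} q^{\el(u)}\, T_u,
\]
where $\mathrm{Stab}_{\mathfrak S_\lambda}(a)$ denotes the subgroup of the row stabiliser $\mathfrak S_\lambda$ that fixes the letter $a$, and then to prove two factorisations independently: $m_\lambda = K \cdot D(\alpha)$ and $T_{a,n}\, m_\mu = K \cdot T_{a,n}$. The desired identity will drop out by combining these with $m_\lambda^* = m_\lambda$.

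For the first factorisation, I observe that the $\mathfrak S_\lambda$-orbit of $a$ is exactly $\{b, b+1, \dots, a\}$ because $a$ is the last entry of row $r$ in $\mft^\lambda$. The elements $c_i = s_{a-1} s_{a-2} \cdots s_{a-i}$ for $0 \le i \le a-b$ (with $c_0 = 1$) have length $i$, each send $a$ to $a - i$, and hence form a complete system of distinguished right coset representatives for $\mathrm{Stab}_{\mathfrak S_\lambda}(a) \backslash \mathfrak S_\lambda$. Regrouping the defining sum of $m_\lambda$ by writing each element uniquely as $u \cdot c_i$ with $u \in \mathrm{Stab}_{\mathfrak S_\lambda}(a)$ and using $\el(u c_i) = \el(u) + i$ yields $m_\lambda = K \cdot D(\alpha)$ immediately.

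For the second factorisation, I would decompose $m_\mu$ as a commuting product over the rows of $\mft^\mu$ and push $T_{a,n}^{-1}$ through each factor separately. The rows of $\mft^\mu$ lying above row $r$ coincide with those of $\mft^\lambda$, and row $r$ of $\mft^\mu$ is $\{b, \dots, a-1\}$; these contribute only generators $T_j$ with $j \le a-2$, which commute with $T_{a,n} = T_a T_{a+1} \cdots T_{n-1}$ by distant commutation. Crucially, the rows of $\mft^\mu$ strictly below row $r$ are obtained from the corresponding rows of $\mft^\lambda$ by subtracting $1$ from every entry, so they contribute generators $T_j$ with $j \in [a, n-2]$. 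On these the standard Hecke-algebra identity $T_{a,n}\, T_j = T_{j+1}\, T_{a,n}$ (a short consequence of $T_j T_{j+1} T_j = T_{j+1} T_j T_{j+1}$ together with commutation of distant generators) shifts the index up by one, converting each below-row factor of $m_\mu$ into the corresponding below-row factor of $m_\lambda$. Assembling all of the pieces yields $T_{a,n}\, m_\mu\, T_{a,n}^{-1} = K$, i.e.\ $T_{a,n}\, m_\mu = K \cdot T_{a,n}$.

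To conclude, I apply $*$ to $m_\lambda = K \cdot D(\alpha)$. Since both $m_\lambda$ and $K$ are sums of the form $\sum q^{\el(v)} T_v$ over subgroups, they are $*$-invariant; hence $m_\lambda = D(\alpha)^* \cdot K$, and equating the two expressions for $m_\lambda$ produces the key relation $K\, D(\alpha) = D(\alpha)^* K$. Chaining the three steps,
\[
m_\lambda\, T_{a,n} \;=\; K\, D(\alpha)\, T_{a,n} \;=\; D(\alpha)^*\, K\, T_{a,n} \;=\; D(\alpha)^*\, T_{a,n}\, m_\mu.
\]
The step I expect to require the most care is the second factorisation: one must verify that no generator $T_{a-1}$ appears in $m_\mu$ (the conjugation rule $T_{a,n} T_j T_{a,n}^{-1} = T_{j+1}$ would fail there), and this is guaranteed precisely because $\alpha$ is removable with $a$ the final entry of row $r$ of $\mft^\lambda$, so row $r$ of $\mft^\mu$ terminates at $a-1$ while every row of $\mft^\mu$ below row $r$ begins at $a$ or later.
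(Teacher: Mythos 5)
Your proof is correct and follows essentially the same route as the paper: your element $K$ is precisely the paper's $m_{\lambda'}$ for the composition $\lambda'=(\mu_1,\dots,\mu_r,1,\mu_{r+1},\dots)$, your first factorisation $m_\lambda=K\,D(\alpha)$ is the paper's coset decomposition $m_\lambda=m_{\lambda'}D(\alpha)$, and your second factorisation $T_{a,n}\,m_\mu=K\,T_{a,n}$ is the $*$-image of the paper's conjugation identity $m_{\lambda'}=T_{n,a}\inv m_\mu T_{n,a}$. The only difference is that you spell out the distinguished coset representatives and the final $*$-argument explicitly, which the paper leaves to the reader.
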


\begin{proof}
Let $\la'$ be the composition $\la' = (\mu_1, \dots, \mu_r, 1, \mu_{r+1}, \dots, \mu_\el)$.   One has $T_{n, a}\inv  T_j  T_{n, a}  = T_{j+1}$  if
 $a \le j \le n-1$.  This follows from the identity in the braid group:
 $$
( \sigma_{a}\inv \cdots \sigma_{n-1}\inv) \sigma_j (\sigma_{n-1} \cdots \sigma_{a}) = \sigma_{j+1},
 $$
 for $a \le j \le n-1$,  where the elements $\sigma_i$ are the Artin generators of the braid group.
 From this, we obtain:
 $$
 m_{\la'} =  T_{n, a}\inv  m_\mu  T_{n, a}.
 $$
 Note that $\mathfrak S_{\la'} \subset \mathfrak S_\la$  and $D(\alpha) = \sum  q^{\el(x)} T_x$, as where the sum is over the distinguished right coset representatives of 
 $\mathfrak S_{\la'}$ in  $\mathfrak S_\la$.  Hence $m_\la = m_{\la'} D(\alpha)$, and the result follows.
\end{proof}

\section{Proof of {Theorem \ref{theorem cell filtration of restricted cell modules}}}
 \label{appendix: cell filtrations}
In this section we  give a proof of \hyperref[theorem cell filtration of restricted cell modules]{Theorem 
\ref*{theorem cell filtration of restricted cell modules}}.   The proof is based on Murphy's fundamental paper ~\cite{MR1327362}, but we refer specifically to  Mathas' reworking of Murphy's theory in  ~\cite[Ch.~3]{MR1711316}.   

\def\mfg{\mathfrak g}
Let $\la$ be a Young diagram of size $n$.  
We recall the definition of a Garnir tableau of shape $\lambda$, see ~\cite[page~33]{MR1711316}.  Suppose  both $(i, j)$ and $(i+1, j)$ are nodes of $\lambda$.   The $(i, j)$--Garnir strip
consists of all nodes of  $\lambda$  in row $i$ weakly to the right of $(i, j)$ together with all nodes
in row $(i+1)$ weakly to the left of $(i+1, j)$.   Let $a$ be the entry of $\mft^\lambda$ in the node $(i, j)$ and $b$ the entry of $\mft^\lambda$ in the node $(i+1, j)$.  
The $(i, j)$--Garnir tableau $\mfg = \mfg_{(i,j)}$ is the (row standard) tableau which agrees with $\mft^\lambda$ outside the Garnir strip, and in which the numbers $a, a+1, \dots, b$ are entered from left to right in the Garnir strip, first in row $i+1$ and then in row $i$. 

\begin{example} Let $\la = (3, 2, 1)$ then 
\begin{align}
\mfg_{(1,1)}=\text{\tiny\Yvcentermath1$\young(234,15,6)$}, \quad \mfg_{(1,2)}=\text{\tiny\Yvcentermath1$\young(145,23,6)$}, \, \text{and} \quad \mfg_{(2,1)}=\text{\tiny\Yvcentermath1$\young(123,56,4)$}\,.
\end{align}
\end{example}

Fix $\lambda$ a Young diagram of size $n$ and a Garnir tableau $\mfg = \mfg_{(i, j)}$ of shape $\lambda$.  
All of the row standard $\lambda$--tableaux which agree with $\mft^\lambda$ outside the $(i, j)$--Garnir strip, apart from $\mfg$,  are in fact standard.  Moreover, a standard $\lambda$--tableau $\tau$ agrees with $\mft^\lambda$ outside the Garnir strip if and only if $\tau \rhd \mfg$.    Define
\begin{equation}
h_\mfg = m_\lambda T_{w(\mfg)} + \sum_{\tau \rhd \mfg}  m_\lambda T_{w(\tau)},
\end{equation}
where the sum is over standard $\lambda$--tableaux $\tau \rhd \mfg$.  
It follows from  ~\cite[Lemma 3.13]{MR1711316} and the cellularity of the Murphy basis that $h_\mfg$ is an element of $M^\lambda \cap \mathcal H_n^{\rhd \lambda}$.   Let $M_0^\lambda$ be the  right
$\mathcal H_n$--module generated by the elements $h_\mfg$, as $\mfg$ varies over all Garnir tableaux of shape $\lambda$.    Then we have $M_0^\lambda \subseteq M^\lambda \cap \mathcal H_n^{\rhd \lambda}$.

\begin{lemma}  \label{Garnir characterization of M lambda intersect H greater than lambda}
$M_0^\lambda = M^\lambda \cap \mathcal H_n^{\rhd \lambda}$.
\end{lemma}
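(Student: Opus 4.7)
The containment $M_0^\lambda \subseteq M^\lambda \cap \mathcal H_n^{\rhd \lambda}$ is already recorded, so the plan is to prove the reverse inclusion. Since $M^\lambda = m_\lambda \mathcal H_n$ has the $R$--basis $\{m_\lambda T_{w(\tau)} : \tau \text{ a row standard } \lambda\text{--tableau}\}$ coming from the coset decomposition of $\mathfrak S_n$ by the Young subgroup $\mathfrak S_\lambda$, it suffices to establish
\[
M^\lambda \;=\; M_0^\lambda \;+\; \sum_{\mft \in \mathcal{T}^{\Std}(\lambda)} R\, m_\lambda T_{w(\mft)}.
\]
Once this is in hand, the natural surjection $M^\lambda/M_0^\lambda \twoheadrightarrow M^\lambda / (M^\lambda \cap \mathcal H_n^{\rhd \lambda}) \cong \Delta^\lambda$ sends the spanning set $\{m_\lambda T_{w(\mft)} + M_0^\lambda : \mft \in \mathcal{T}^{\Std}(\lambda)\}$ of $M^\lambda/M_0^\lambda$ onto the cell module basis of $\Delta^\lambda$; since the target is free on that basis, the surjection must be injective, which forces the desired equality.

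The displayed identity will follow from the claim that for every row standard but non-standard $\tau$,
\[
m_\lambda T_{w(\tau)} \;\in\; M_0^\lambda \;+\; \sum_{\mft \in \mathcal{T}^{\Std}(\lambda)} R\, m_\lambda T_{w(\mft)}.
\]
I would prove this by downward induction on $\tau$ in the dominance order on row standard $\lambda$--tableaux; the maximum $\mft^\lambda$ is standard, so the base case is vacuous. At a row standard non-standard $\tau$, locate a column violation $\tau(i, j) > \tau(i+1, j)$ and invoke the Garnir relation attached to the $(i, j)$--strip of $\tau$. This relation is produced from the generator $h_{\mfg_{(i,j)}}$ of $M_0^\lambda$ by right multiplying by the element $T_u$ that transports the strip configuration of $\mft^\lambda$ into the strip configuration of $\tau$, while fixing the complementary entries. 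Because $M_0^\lambda$ is a right $\mathcal H_n$--submodule, the product lies in $M_0^\lambda$; a length/Bruhat bookkeeping in the style of \cite[Ch.~3]{MR1711316} then shows that it expands as $m_\lambda T_{w(\tau)} + \sum_{\tau' \rhd \tau} c_{\tau'}\, m_\lambda T_{w(\tau')}$, with each $\tau'$ row standard and strictly more dominant than $\tau$. The induction hypothesis then handles each $m_\lambda T_{w(\tau')}$ and completes the step.

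The main obstacle is this straightening step: one must verify that right multiplication of $h_{\mfg_{(i,j)}}$ by the transporting element produces exactly an identity of the stated form, with all of the new tableaux row standard and strictly more dominant than $\tau$. This is essentially the content of Murphy's straightening procedure. The analogous identity modulo $\mathcal H_n^{\rhd \lambda}$ is a standard ingredient in the proof of the Murphy basis theorem, but the strengthening to membership in the smaller submodule $M_0^\lambda$ requires a careful audit to confirm that every higher--term relation used in the straightening genuinely arises as a right $\mathcal H_n$--multiple of some generator $h_\mfg$, rather than merely as some element of $\mathcal H_n^{\rhd \lambda}$.
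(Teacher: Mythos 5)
Your proposal is correct and follows essentially the same route as the paper: both reduce the lemma to the claim that $m_\lambda T_{w(\tau)}$ for row standard non-standard $\tau$ lies in $M_0^\lambda$ plus the span of the standard Murphy elements, and then conclude by linear independence of $\{m_\lambda T_{w(\mft)} + \mathcal H_n^{\rhd\lambda} : \mft \in \std\lambda\}$ (your injectivity-of-the-surjection phrasing is equivalent to the paper's direct deduction). The ``careful audit'' you flag --- that the straightening produces an element of $M_0^\lambda$ rather than merely of $\mathcal H_n^{\rhd\lambda}$ --- is exactly the point the paper handles by citing the \emph{proof} (not just the statement) of \cite[Lemma 3.15]{MR1711316}, so your sketch matches the paper's argument in both substance and level of detail.
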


\begin{proof} By the proof of~\cite[Lemma 3.15]{MR1711316}, if $\mft$ is a row standard $\lambda$--tableau that is not standard, then $m_\lambda T_{w(\mft)}  = x + h$,  where
$x$ is a linear combination of Murphy basis elements $m_\lambda T_{w(\mfv)}$  (with $\mfv \in \std \lambda$) and 
$h \in M_0^\lambda$.    Thus we have
$$
\begin{aligned}
M^\lambda &= \spn\{m_\lambda T_{w(\mfv)}: v \in \std \lambda\} + M_0^\lambda.
\end{aligned}
$$  
It follows from this that $ M^\lambda \cap \mathcal H_n^{\rhd \lambda} \subseteq M_0^\lambda$. 
\end{proof}

\begin{lemma}  \label{lemma: a submodule lemma}
Let $\lambda$ be a Young diagram of size $n$.  
 Let $\mathcal S$ be a subset of the set of row standard $\lambda$--tableaux  and let $I$ be a subset of $\{1, 2, \dots, n-1\}$, with the following properties:
 \begin{enumerate}
 \item  If $\mfs \in \mathcal S$ and $\mft$ is a row standard $\lambda$--tableau with $\mft \unrhd \mfs$,  then $\mft \in \mathcal S$.
 \item  If $\mfs \in \mathcal S$ and $i \in I$, with $i$ and $i+1$ in different rows of $\mfs$,  then $\mfs s_i \in \mathcal S$.
 \end{enumerate}
 Let $\mathcal H_I$  be the unital subalgebra of $\mathcal H_n$ generated by $\{T_i : i \in I\}$.  Then
 $$
 M = \spn\{ m_\lambda T_{w(\mft)} : \mft \in \mathcal S \cap \std \lambda\} +  (M^\lambda \cap \mathcal H_n^{\rhd \lambda})
 $$
 is a right $\mathcal H_I$--submodule of $M^\lambda$.
\end{lemma}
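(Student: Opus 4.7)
The plan is to verify $MT_i \subseteq M$ for each generator $T_i$ of $\mathcal H_I$, i.e.\ for each $i \in I$. Because $\mathcal H_n^{\rhd \lambda}$ is a two-sided ideal of $\mathcal H_n$, the summand $M^\lambda \cap \mathcal H_n^{\rhd \lambda}$ of $M$ is automatically stable under right multiplication by any element of $\mathcal H_n$; thus the task reduces to checking that $m_\lambda T_{w(\mft)} T_i \in M$ for every $\mft \in \mathcal S \cap \std \lambda$ and every $i \in I$. I would split the analysis according to the relative positions of $i$ and $i+1$ in $\mft$.

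If $i, i+1$ lie in the same row of $\mft$, they occupy adjacent cells of that row, and a short manipulation using the braid and commutation relations in $\mathcal H_n$ rewrites $T_{w(\mft)} T_i$ as $T_{s_k} T_{w(\mft)}$ with $s_k$ a simple reflection in the Young subgroup $\mathfrak S_\lambda$. Combining this with the identity $m_\lambda T_{s_k} = q m_\lambda$ produces
\[
m_\lambda T_{w(\mft)} T_i = q m_\lambda T_{w(\mft)} \in M.
\]
If instead $i, i+1$ lie in different rows of $\mft$, property~(2) of $\mathcal S$ yields $\mft s_i \in \mathcal S$, and the Hecke quadratic relation produces
\[
m_\lambda T_{w(\mft)} T_i = a\, m_\lambda T_{w(\mft)} + m_\lambda T_{w(\mft s_i)}
\]
with $a \in \{0,\, q - q^{-1}\}$, the choice being dictated by whether $\ell(w(\mft) s_i) > \ell(w(\mft))$ or not. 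The first summand is plainly in $M$. If moreover $\mft s_i$ is standard then $\mft s_i \in \mathcal S \cap \std \lambda$, so the second summand also lies in $M$, finishing this case.

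The subtle subcase is when $\mft s_i$ is row-standard but not standard, which occurs precisely when $i, i+1$ lie in the same column of $\mft$. To handle it, I would expand $m_\lambda T_{w(\mft s_i)}$ using the straightening procedure from the proof of \cite[Lemma~3.15]{MR1711316} (already invoked in the proof of Lemma~\ref{Garnir characterization of M lambda intersect H greater than lambda}): the result equals a linear combination of basis elements $m_\lambda T_{w(\mfv)}$ for standard $\mfv$ strictly dominating $\mft s_i$ (in the natural extension of the dominance order to row-standard tableaux), plus an element of $M_0^\lambda \subseteq M^\lambda \cap \mathcal H_n^{\rhd \lambda}$. Property~(1) of $\mathcal S$ applied to $\mft s_i$ then places each such $\mfv$ in $\mathcal S$, so every term of the expansion lies in $M$. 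The main obstacle is justifying that Garnir straightening produces only $\mfv$ dominating $\mft s_i$, rather than arbitrary standard tableaux; this has to be read off carefully from the inductive structure of Mathas' proof of Lemma~3.15.
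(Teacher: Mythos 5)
Your proof is correct and follows essentially the same route as the paper's: reduce to the generators $T_i$ of $\mathcal H_I$, split into the same-row, different-rows-and-different-columns, and same-column cases, and in the last case straighten $m_\lambda T_{w(\mft s_i)}$ via \cite[Lemma 3.15]{MR1711316} together with cellularity, absorbing the resulting standard tableaux into $\mathcal S$ by property (1) --- exactly as the paper does, including leaving the dominance assertion for the straightening to that cited lemma. (The only discrepancy is cosmetic: the paper writes the same-row coefficient as $q^2$ where you write $q$; with the normalization $m_\lambda=\sum_{v}q^{\ell(v)}T_v$ and $(T_i-q)(T_i+q^{-1})=0$ your value is the correct one, and in any case the coefficient is immaterial to the argument.)
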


\begin{proof}  Let $\mft \in \mathcal S \cap \std \lambda$ and let $i \in I$.   We have to show that
$m_\lambda T_{w(t)} T_i \in M$.    If $i$ and $i+1$ are in the same row of $\mft$, then
$m_\lambda T_{w(t)} T_i  = q^2 m_\lambda T_{w(t)}$.  If $i$ and $i+1$ are in different rows and different columns, then $\mft' = \mft s_i \in \mathcal S \cap \std \lambda$ by hypothesis, and
$m_\lambda T_{w(t)} T_i  $ is a linear combination of $m_\lambda T_{w(t)}$ and $m_\lambda T_{w(t')}$.
Finally, if $i$ and $i+1$ are in the same column of $\mft$,  then $\mft' = \mft s_i$ is a row standard but not standard,  $\mft' \in \mathcal S$ by hypothesis, and $m_\lambda T_{w(t)} T_i  = m_\lambda T_{w(t')} $.   By ~\cite[Lemma 3.15]{MR1711316} and cellularity of the Murphy basis, 
$m_\lambda T_{w(t')} = x+ h$  where $x$ is a linear combination of elements $m_\lambda T_{w(\mfv)}$ with $\mfv$ standard and $\mfv \rhd \mft'$,  and $h \in M^\lambda \cap \mathcal H_n^{\rhd \lambda}$.  
By hypothesis, each such $\mfv$ is in $\mathcal S$,  so
$m_\lambda T_{w(t)} T_i  = m_\lambda T_{w(t')} \in M$.  
\end{proof}

Let $\la$ be a Young diagram of size $n$ and let $\mft \in \std \la$.  For any $1 \le i \le n$, let $\row_\mft(i)$ denote the row in which $i$ appears in $\mft$.  

\begin{corollary}  \label{H n-1 invariance of tableaux with row index of n large}
Let $\lambda$ be a Young diagram of size $n$ and let $r \ge 1$.  Then
$$
\spn\{ m_\lambda T_{w(t)} : \mft \in \std \lambda \text{ and } 
\row_\mft(n) \ge r\} +  (M^\lambda \cap \mathcal H_n^{\rhd \lambda})
$$
is a right $\mathcal H_{n-1}$--submodule of $M^\lambda$.
\end{corollary}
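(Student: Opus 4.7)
The plan is to apply Lemma~\ref{lemma: a submodule lemma} with the choices $I = \{1, 2, \ldots, n-2\}$ (so that $\mathcal H_I = \mathcal H_{n-1}$) and
$$
\mathcal S = \{\,\mft : \mft \text{ is a row standard $\lambda$--tableau with } \row_\mft(n) \ge r\,\}.
$$
The submodule produced by the lemma coincides with the span in the statement of the corollary, so it suffices to verify hypotheses (1) and (2) of the lemma for this $\mathcal S$ and this $I$.

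Hypothesis (2) is immediate. For $i \in I$ we have $i \le n - 2$, so the transposition $s_i$ fixes $n$; consequently $\row_{\mfs s_i}(n) = \row_\mfs(n) \ge r$ whenever $\mfs \in \mathcal S$, giving $\mfs s_i \in \mathcal S$.

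The substantive step is hypothesis (1), which I would reduce to the following general monotonicity: if $\mft$ and $\mfs$ are row standard tableaux of shape $\lambda$ with $\mft \unrhd \mfs$, then $\row_\mft(n) \ge \row_\mfs(n)$. To prove this, unwind the partial--sum characterization of dominance: $\mft \unrhd \mfs$ means that for every $m$ and every $k$,
$$
\sum_{i=1}^{k} \bigl|\{\, j \le m : j \text{ appears in row } i \text{ of } \mft\,\}\bigr| \;\ge\; \sum_{i=1}^{k} \bigl|\{\, j \le m : j \text{ appears in row } i \text{ of } \mfs\,\}\bigr|.
$$
Taking $m = n - 1$, the left-hand side equals $\sum_{i=1}^{k} \lambda_i - [\row_\mft(n) \le k]$, with $[\,\cdot\,]$ the Iverson bracket, and the right-hand side is the analogue for $\mfs$. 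The dominance inequality therefore reduces to $[\row_\mfs(n) \le k] \ge [\row_\mft(n) \le k]$ for every $k$; specializing to $k = \row_\mft(n)$ gives $\row_\mfs(n) \le \row_\mft(n)$. Hence $\row_\mfs(n) \ge r$ implies $\row_\mft(n) \ge r$, confirming hypothesis (1).

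The only point calling for attention is the unwinding of dominance in the previous paragraph; after that, the corollary is an immediate instance of Lemma~\ref{lemma: a submodule lemma}, so no separate obstacle arises.
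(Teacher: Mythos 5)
Your proposal is correct and is exactly the paper's proof: the paper also applies Lemma~\ref{lemma: a submodule lemma} with $\mathcal S = \{\mfs \text{ row standard} : \row_\mfs(n) \ge r\}$ and $I = \{1,\dots,n-2\}$, merely leaving the verification of the two hypotheses to the reader. Your explicit check of hypothesis (1) via the partial-sum form of dominance at level $m=n-1$ is a correct filling-in of that omitted detail.
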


\begin{proof} In \hyperref[lemma: a submodule lemma]{Lemma \ref*{lemma: a submodule lemma}},  take $\mathcal S$ to be the set of row standard tableaux $\mfs$ such that $\row_\mfs(n) \ge r$ and take $I = \{1, 2, \dots, n-2\}$.  
\end{proof}

In the following discussion,  $[\mfs]$ denotes the shape of a standard tableau $\mfs$.

\begin{corollary}  \label{corollary: another submodule theorem}
Let $\lambda$ be a Young diagram of size $n$ and let $\gamma$ be a node of $\lambda$.  Let $m$ denote the entry of $\mft^\la$ in node $\gamma$.
   Let $\mathcal H_{m, n}$ be the unital subalgebra of $\mathcal H_n$ generated by $\{T_{m}, \dots, T_{n-1}\}$.  Then
$$
\spn\{ m_\lambda T_{w(t)} :  \mft \in \std \lambda \text{ and }   [\mft \downarrow_{\,m-1}] =  [\mft^\lambda \downarrow_{\,m-1}] \}
+  (M^\lambda \cap \mathcal H_n^{\rhd \lambda})
$$
is a right $\mathcal H_{m, n}$--submodule of $M^\lambda$.
\end{corollary}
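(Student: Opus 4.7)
The plan is to apply \hyperref[lemma: a submodule lemma]{Lemma \ref*{lemma: a submodule lemma}} with a carefully chosen pair $(\mathcal S, I)$. Let $\nu = \shape(\mft^\lambda \downarrow_{m-1})$, the Young diagram consisting of the nodes of $\mft^\lambda$ that hold the entries $1, \dots, m-1$. I would take
\[
\mathcal S = \{\, \mfs \text{ row standard $\lambda$-tableau} : \shape(\mfs \downarrow_{m-1}) = \nu \,\}
\]
and $I = \{m, m+1, \dots, n-1\}$, so that the subalgebra $\mathcal H_I$ coincides with $\mathcal H_{m,n}$. For a standard $\mfs$ the condition $\shape(\mfs \downarrow_{m-1}) = \nu$ is identical to $[\mfs \downarrow_{m-1}] = [\mft^\lambda \downarrow_{m-1}]$, so the submodule produced by the lemma is exactly the one asserted in the corollary.

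The essential content is hypothesis~(1). The key observation is that $\nu$ is dominance-maximal among compositions $\mu = (\mu_1, \mu_2, \dots)$ satisfying $\mu_i \le \lambda_i$ and $\sum_i \mu_i = m-1$, since greedy top-down packing maximizes every partial sum $\sum_{i \le j} \mu_i$ subject to those constraints. Consequently, for every row standard $\lambda$-tableau $\mft$ one has $\shape(\mft \downarrow_{m-1}) \unlhd \nu$. If $\mfs \in \mathcal S$ and $\mft \unrhd \mfs$, then $\shape(\mft \downarrow_{m-1}) \unrhd \shape(\mfs \downarrow_{m-1}) = \nu$, and combining with the maximality forces $\shape(\mft \downarrow_{m-1}) = \nu$. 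Row-standardness then pins the entries $1, \dots, m-1$ of $\mft$ to the leftmost cells of each row, i.e.\ to the nodes of $\nu$, so $\mft \in \mathcal S$.

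Hypothesis~(2) is essentially free: for $i \in I$ we have $i \ge m$, so applying $s_i$ only permutes the two entries $i, i+1$, both of which exceed $m-1$, leaving the positions of $1, \dots, m-1$ undisturbed. Thus $\shape(\mfs s_i \downarrow_{m-1}) = \nu$ and $\mfs s_i \in \mathcal S$ (and $\mfs s_i$ remains row standard under the hypothesis that $i, i+1$ lie in different rows of $\mfs$). With both hypotheses in place, \hyperref[lemma: a submodule lemma]{Lemma \ref*{lemma: a submodule lemma}} delivers the $\mathcal H_{m,n}$-submodule structure. I do not expect a serious obstacle here; the only non-routine ingredient is the dominance-maximality of $\nu$ that powers hypothesis~(1), and beyond that the proof is a direct translation.
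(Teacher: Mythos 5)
Your proposal is correct and takes exactly the paper's route: the paper proves this corollary by invoking Lemma~\ref{lemma: a submodule lemma} with the same choice of $\mathcal S$ (row standard tableaux $\mfs$ with $[\mfs \downarrow_{m-1}] = [\mft^\lambda \downarrow_{m-1}]$) and $I = \{m, \dots, n-1\}$, leaving the verification of hypotheses (1) and (2) implicit. Your dominance-maximality argument for $\nu$ correctly supplies that omitted verification.
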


\begin{proof}  In \hyperref[lemma: a submodule lemma]{Lemma \ref*{lemma: a submodule lemma}},  take $\mathcal S$ to be the set of row standard tableaux $\mfs$ such that  $[\mfs \downarrow_{\,m-1}] =  [\mft^\lambda \downarrow_{\,m-1}]$ and take $I = \{m, \dots, n-1\}$. 
\end{proof}

For the remainder of this section, we fix $n \ge 1$ and a Young diagram $\lambda$ of size $n$.
Let $\alpha_1, \dots, \alpha_p$ be the list of removable nodes of $\lambda$, listed from bottom to top,
and let $\mu\power j = \lambda \setminus \alpha_j$.    Let $N_0 = (0)$ and for $1 \le j \le p$, let 
$N_j$ be the $R$--submodule of $\cell {\mathcal H} n \lambda$ spanned by by the basis elements
$m^\lambda_\mft$ such that $\node_\mft(n) \in \{\alpha_1, \dots, \alpha_j\}$.
Then we have
$$
(0) = N_0 \subseteq N_1 \cdots \subseteq N_p = \Res^{\mathcal H_n}_{\mathcal H_{n-1}}(\cell {\mathcal H} n \lambda).
$$
The explicit form of the assertion of \,\hyperref[theorem cell filtration of restricted cell modules]{Theorem \ref*{theorem cell filtration of restricted cell modules}} is that the $N_j$  are $\mathcal H_{n-1}$--submodules of $ \Res^{\mathcal H_n}_{\mathcal H_{n-1}}(\cell {\mathcal H} n \lambda)$ and $N_j/N_{j-1} \cong \cell {\mathcal H} {n-1} {\mu \power j}$  for
$1 \le j \le p$.   The isomorphism is determined by 
\begin{equation} m^{\mu \power j}_\mfs \mapsto   m^\lambda_{\mfs \cup \alpha_j} + N_{j-1}.
\end{equation}

\begin{corollary}  For each $j$,  $N_j$ is a right $\mathcal H_{n-1}$-submodule of 
$\cell {\mathcal H} n \lambda$.
\end{corollary}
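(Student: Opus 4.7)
The plan is to derive this corollary from \hyperref[H n-1 invariance of tableaux with row index of n large]{Corollary \ref*{H n-1 invariance of tableaux with row index of n large}} by pulling back through the canonical projection $M^\lambda \twoheadrightarrow \cell{\mathcal H}{n}{\lambda}$, which identifies the target with $M^\lambda/(M^\lambda \cap \mathcal H_n^{\rhd \lambda})$ and sends $m_\lambda T_{w(\mft)}$ to $m^\lambda_\mft$.

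The first step is a purely combinatorial reformulation of the indexing set. For any standard $\lambda$-tableau $\mft$, the entry $n$ must occupy a removable node of $\lambda$, so $\node_\mft(n)$ is one of $\alpha_1, \dots, \alpha_p$. Writing $r_k$ for the row index of $\alpha_k$, the convention that the removable nodes are listed from bottom to top forces $r_1 > r_2 > \cdots > r_p$. Consequently the condition $\node_\mft(n) \in \{\alpha_1, \dots, \alpha_j\}$ is equivalent to $\row_\mft(n) \ge r_j$.

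Having made this identification, I would invoke \hyperref[H n-1 invariance of tableaux with row index of n large]{Corollary \ref*{H n-1 invariance of tableaux with row index of n large}} with $r = r_j$ to conclude that the subset
$$
S_j \;=\; \spn\{m_\lambda T_{w(\mft)} : \mft \in \std \lambda,\; \row_\mft(n) \ge r_j\} + (M^\lambda \cap \mathcal H_n^{\rhd \lambda})
$$
of $M^\lambda$ is right $\mathcal H_{n-1}$-stable. The image of $S_j$ under the projection to $\cell{\mathcal H}{n}{\lambda}$ is exactly $N_j$ by the equivalence of conditions noted above, so $N_j$ inherits $\mathcal H_{n-1}$-invariance as the image of an $\mathcal H_{n-1}$-submodule under an $\mathcal H_{n-1}$-module homomorphism.

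I do not anticipate any serious obstacle. All the substantive work, namely controlling how the Murphy spanning elements $m_\lambda T_{w(\mft)}$ absorb the generators $T_i$ (with $i \le n-2$) modulo $M^\lambda \cap \mathcal H_n^{\rhd \lambda}$, has been carried out in \hyperref[lemma: a submodule lemma]{Lemma \ref*{lemma: a submodule lemma}} and packaged in \hyperref[H n-1 invariance of tableaux with row index of n large]{Corollary \ref*{H n-1 invariance of tableaux with row index of n large}}. The only matter requiring explicit attention in the write-up is the translation between the ``$\node_\mft(n)$ among the bottom $j$ removable nodes'' description of $N_j$ and the ``$\row_\mft(n) \ge r_j$'' description used in the earlier corollary, which is what makes that corollary directly applicable.
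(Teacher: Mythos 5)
Your proposal is correct and is exactly the paper's argument: the paper proves this corollary by simply citing \hyperref[H n-1 invariance of tableaux with row index of n large]{Corollary \ref*{H n-1 invariance of tableaux with row index of n large}}, and the details you supply (the equivalence of $\node_\mft(n)\in\{\alpha_1,\dots,\alpha_j\}$ with $\row_\mft(n)\ge r_j$ via the bottom-to-top ordering, and passing to the quotient $M^\lambda/(M^\lambda\cap\mathcal H_n^{\rhd\lambda})$) are precisely what makes that citation work.
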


\begin{proof}  Immediate from \hyperref[H n-1 invariance of tableaux with row index of n large]{Corollary \ref*{H n-1 invariance of tableaux with row index of n large}}.
\end{proof}

Our goal is to show that $N_j/N_{j-1} \cong \cell {\mathcal H} {n-1} {\mu\power j}$  as $\mathcal H_{n-1}$--modules,
for each $j \ge 1$.

We fix one removable node  $\alpha = \alpha_k$  of $\lambda$, and write $\mu = \lambda \setminus \alpha$.   Let $D(\alpha)$ be defined as in the statement of \hyperref[lemma:  curious identity]{Lemma \ref*{lemma:  curious identity}}.
For a $\mathcal H_n$ module $M$,  we will  write $\Res(M)$ for $\Res^{\mathcal H_n}_{\mathcal H_{n-1}}(M)$.
Because of  Equation \eqref{relate m mu with m lambda when mu arrow lambda}, we
have an $\mathcal H_{n-1}$--module homomorphism from $M^\mu$ to $\Res(M^\lambda)$ defined by 
$$\varphi_0 : m_\mu h \mapsto  m_\lambda T_{a, n} h.$$    
If $\mfs \in \mathcal T(\mu)$ is a $\mu$--tableau, then we have
$$
\varphi_0(m_\mu T_{w(\mfs)}) =  m_\lambda T_{a, n} T_{w(\mfs)} = m_\lambda T_{w(\mfs \cup \alpha)},
$$
by \hyperref[permutation of  tableau adjoined addable node]{Lemma \ref*{permutation of  tableau adjoined addable node}}.   

  Let $\varphi$ be the composite homomorphism
$$
\varphi:  M^\mu \stackrel{\varphi_0}{\longrightarrow}  \Res(M^\lambda) \to 
\Res(\cell {\mathcal H} n \lambda)  \to  \Res(\cell {\mathcal H} n  \lambda)/N_{k-1}, 
$$
where the latter two maps are canonical quotient maps.  We claim that $\varphi$ factors through
$\cell {\mathcal H} {n-1} \mu$.    Because of 
\hyperref[Garnir characterization of M lambda intersect H greater than lambda]{Lemma \ref*{Garnir characterization of M lambda intersect H greater than lambda}}, 
it suffices to show that if $\mfg_0$ is a Garnir tableau of shape $\mu$, then $\varphi(h_{\mfg_0}) = 0$. 

Let $\mfg_0$  be the $(i, j)$--Garnir tableau of shape $\mu$  and let $\mfg$ be the $(i, j)$--Garnir tableau of shape $\lambda$.  There are two cases to consider:

{\em Case 1.}  The node $\alpha$ is not in the Garnir strip of $\mfg$.    In this case, there is a one to one correspondence between row standard tableaux $\tau_0$ of shape $\mu$ such that $\tau_0 \unrhd \mfg_0$, and row standard tableaux $\tau$ of shape $\lambda$ such that $\tau \unrhd \mfg$, given by
\begin{equation} \label{tau and tau zero 1}
\tau (n, n-1, \dots, m) = \tau_0 \cup \alpha,
\end{equation} 
where $m = \mf g(\alpha) = \mft^\lambda(\alpha)$. 
We claim that (when $\tau$ and $\tau_0$ are so related)
\begin{equation}     \label{tau and tau zero 2}
T_{w(\tau_0 \cup \alpha)}  =  T_{w(\tau)} T_{m, n}.
\end{equation}
In fact,  one can check that
$$
\tau \rhd \tau s_m \rhd \tau s_m s_{m+1} \rhd \cdots \rhd \tau (s_m s_{m+1} \cdots s_{n-1}) = \tau_0 \cup \alpha,
$$
and \eqref{tau and tau zero 2} follows. 
Now we have
\begin{equation}
\varphi_0(h_{\mfg_0}) =  m_\lambda \left(\sum_{\tau_0 \unrhd \mfg_0} T_{w(\tau_0 \cup \alpha)}\right)
= m_\lambda \left(\sum_{\tau \unrhd \mfg} T_{w(\tau)} \right ) T_{m, n} 
= h_\mfg T_{m, n}.
\end{equation}
Thus  $\varphi_0(h_{\mfg_0}) \in M^\lambda \cap \mathcal H_n^{\rhd \lambda}$, so $\varphi(h_{\mfg_0}) = 0$.   

{\em Case 2.}    The node $\alpha$ is in the Garnir strip of $\mfg$.  Let $m = \mfg(\alpha)$,  the largest entry in the Garnir strip of $\mfg$.   The row standard tableaux $\tau$ such that $\tau \unrhd \mfg$ either have $\node_\tau(m) = \alpha$  or $\node_\tau(m) =  (i+1, j)$.    Let $A$ be the set of $\tau$  such that
$\node_\tau(m) = \alpha$  and let $B$ be the set of $\tau$ such that $\node_\tau(m) =  (i+1, j)$. 
The set $A$ is in one to one correspondence with the set of row standard tableaux $\tau_0$ of shape $\mu$ with $\tau_0 \unrhd \mfg_0$; the correspondence is given by 
\begin{equation*} \label{tau and tau zero 3}
\tau (n, n-1, \dots, m) = \tau_0 \cup \alpha.
\end{equation*} 
For $\tau$ and $\tau_0$ so related we have
\begin{equation*}     \label{tau and tau zero 4}
T_{w(\tau_0 \cup \alpha)}  =  T_{w(\tau)} T_{m, n}.
\end{equation*}
Thus we have
\begin{equation}
\begin{aligned}
\varphi_0(h_{\mfg_0}) &=  m_\lambda \left(\sum_{\tau_0 \unrhd \mfg_0} T_{w(\tau_0 \cup \alpha)}\right)
= m_\lambda \left(\sum_{\tau  \in A} T_{w(\tau)} \right ) T_{m, n} \\
&= h_\mfg T_{m, n} -  m_\lambda \left(\sum_{\tau  \in B} T_{w(\tau)} \right ) T_{m, n} 
\end{aligned}
\end{equation}
If $\tau \in B$,   then $[\tau \downarrow_{m-1}] = [\mft^\lambda \downarrow_{m-1}]$.    By
\hyperref[corollary: another submodule theorem]{Corollary \ref*{corollary: another submodule theorem}},   for $\tau \in B$, 
$m_\lambda T_{w(\tau)}  T_{m, n}  =  x + h$,  where  $h \in M^\lambda \cap \mathcal H_n^{\rhd \lambda}$ and 
$x$ is a linear combination of Murphy basis elements
$m_\lambda T_{w(\mfv)}$  with $\mfv \in \std \lambda$ and 
$[\mfv \downarrow_{m-1}] = [\mft^\lambda \downarrow_{m-1}]$.    For such $\mfv$,  
$\row_\mfv(n)  \ge i+1$,  so the node of $n$ in $\mfv$ is one of $\alpha_1, \dots, \alpha_{k-1}$.  
Hence, $\varphi_0(h_{\mfg_0})$ is contained in
$$
M^\lambda \cap \mathcal H_n^{\rhd \lambda}  + \spn\{m_\lambda T_{w(\mfv)} : \mfv \in \std \lambda \text{ and }  \node_\mfv(n) \in \{\alpha_1, \dots, \alpha_{k-1}\} \}.
$$
It follows that 
 $\varphi(h_{\mfg_0}) = 0$.  This completes the proof that
$\varphi$ factors through $\cell {\mathcal H} {n-1} \mu$.  

The map $\bar \varphi: \cell {\mathcal H} {n-1} \mu \to \Res(\cell  {\mathcal H} {n} \lambda)/N_{k-1}$  determined by $\varphi$ satisfies
$$
\bar\varphi(m^\mu_\mft) =  m^\lambda_{\mft \cup \alpha} + N_{k-1},
$$
so has range $N_k/N_{k-1}$, and is an isomorphism onto its range.  This completes the proof of \hyperref[theorem cell filtration of restricted cell modules]{Theorem 
\ref*{theorem cell filtration of restricted cell modules}}.



\def\MR#1{\href{http://www.ams.org/mathscinet-getitem?mr=#1}{MR#1}}
\def\Dbar{\leavevmode\lower.6ex\hbox to 0pt{\hskip-.23ex \accent"16\hss}D}
\providecommand{\bysame}{\leavevmode\hbox to3em{\hrulefill}\thinspace}
\providecommand{\MR}{\relax\ifhmode\unskip\space\fi MR }
\providecommand{\MRhref}[2]{%
  \href{http://www.ams.org/mathscinet-getitem?mr=#1}{#2}
}
\providecommand{\href}[2]{#2}


\end{document}